\newtheorem*{conj*}{Conjecture}
\newtheorem{theorem}{Theorem}[section]
\theoremstyle{definition}
\newtheorem*{remark}{Remark}
\theoremstyle{plain}
\newtheorem{lemma}[theorem]{Lemma}
\newtheorem{prop}[theorem]{Proposition}
\newcommand{\Z}{\mathbb{Z}}
\newcommand{\R}{\mathbb{R}}
\newcommand{\N}{\mathbb{N}}
\newcommand{\C}{\mathbb{C}}
\newcommand{\im}[1]{\text{Im}\(#1\)}
\newcommand{\re}[1]{\text{Re}\(#1\)}
\DeclareMathOperator\Log{Log}
\numberwithin{equation}{section}
\newtheoremstyle{example}
  {\topsep}   % ABOVESPACE
  {\topsep}   % BELOWSPACE
  {\normalfont}  % BODYFONT
  {0pt}       % INDENT (empty value is the same as 0pt)
  {\bfseries} % HEADFONT
  {.}         % HEADPUNCT
  {5pt plus 1pt minus 1pt} % HEADSPACE
  {}          % CUSTOM-HEAD-SPEC
\theoremstyle{example}
\def\({\left(}
\def\){\right)}
\def\calC{\mathcal{C}}
\newcommand\blfootnote[1]{%
	\begingroup
	\renewcommand\thefootnote{}\footnote{#1}%
	\addtocounter{footnote}{-1}%
	\endgroup
}
\author{Jashan Bal$^\dagger$}
\author{Fern Haraldson$^\dagger$}
\author{Joshua Males}
\author{Ian Thompson}
\address{Department of Mathematics, University of Manitoba, Winnipeg, Manitoba, Canada R3T 2N2}
\email{balj@myumanitoba.ca}
\email{haraldsf@myumanitoba.ca}
\email{joshua.males@umanitoba.ca}
\email{thompsoi@myumanitoba.ca}
\thanks{J.M. is supported by the Pacific Institute for the Mathematical Sciences (PIMS). The research and findings may not reflect those of the Institute. I.T. was partially supported by an NSERC CGS-D Scholarship and the Manitoba eXperimental Mathematics Laboratory.}
\begin{document}
\title[Jensen polynomials associated with Wright's circle method]{Jensen polynomials associated with Wright's circle method: Hyperbolicity and Tur\'an inequalities}
\blfootnote{$^\dagger$ Undergraduate author at the University of Manitoba.}

\keywords{Wright's Circle Method, asymptotics, Tur\'{a}n inequalities}
\subjclass[]{}

\begin{abstract}
We study the Fourier coefficients of functions satisfying a certain version of Wright's circle method with finitely many major arcs. We show that the Jensen polynomials associated with such Fourier coefficients are asymptotically hyperbolic, building on the framework of Griffin--Ono--Rolen--Zagier and others. Consequently, we prove that the Fourier coefficients asymptotically satisfy all higher-order Tur\'an inequalities. As an application, we apply our results to both $(q^t;q^t)_\infty^{-r}$, which counts $r$-coloured partitions into parts divisible by $t$, and to the function $(q^{a};q^{p})_\infty^{-1}$ where $p$ is prime and $0\leq a<p$, a ubiquitous function throughout number theory.
\end{abstract}
\maketitle

\vspace{-3.5mm}
\section{Introduction}\label{S:Intro}

A classical result of P\'olya \cite{polya1927algebraisch} reformulated the Riemann Hypothesis in terms of hyperbolicity of the so-called Jensen polynomials associated with a particular Taylor expansion. Accordingly, a polynomial is said to be \emph{hyperbolic} if every zero is a real number and, given a sequence of real numbers $\{\alpha(n)\}$, the \emph{Jensen polynomial of degree $d$ and shift $n$} is defined by\[J_\alpha^{d,n}(X) = \sum_{j=0}^d {{d}\choose{j}}\alpha(n+j)X^j\]where $d,n\in\N$. Then, P\'olya showed that the Riemann Hypothesis is equivalent to showing hyperbolicity of every Jensen polynomial associated with the sequence of Taylor coefficients arising from the expansion 
\begin{align}\label{EQ:RH}
	(-1+4z^2)\pi^{-\frac{(1+2z)}{4}}\Gamma\(\frac{1+2z}{4}\)\zeta\(\frac{1}{2}+z\) = \sum_{n=0}^\infty \frac{\gamma(n)}{n!}z^{2n}.
\end{align} 
It was recently shown that, for any fixed $d\in\N$, the Jensen polynomials $J_\gamma^{d,n}(X)$ are hyperbolic for all but finitely many $n$ \cite{griffin2019jensen}. Moreover, up to normalization, the Jensen polynomial $J_\gamma^{d,n}(X)$ tends towards the Hermite polynomial $H_d(X)$ as $n\rightarrow\infty$. Within \cite{griffin2019jensen}, it was also shown that a large family of modular objects induce asymptotically hyperbolic Jensen polynomials and that again, up to normalization, the Jensen polynomials tend to Hermite polynomials. Several expansions have since appeared in the growing literature surrounding the field (see e.g. \cite{griffin2022jensen},\cite{larson2019hyperbolicity},\cite{o2022limits},\cite{o2021zeros},\cite{wagner2020jensen}).

We expand on that list by proving that the Fourier coefficients of another large class of functions give rise to Jensen polynomials that are asymptotically hyperbolic. Classically, the philosophy behind the Circle Method was initiated by Hardy and Ramanujan \cite{hardy2000asymptotic} in their seminal work which gave a precise asymptotic formula for the integer partitions. A \emph{partition} $\lambda = (\lambda_1, \ldots, \lambda_s)$ of a positive integer $n$ is a non-decreasing tuple satisfying $\sum_{i=1}^s |\lambda_i|=n$. Letting $p(n)$ denote the number of partitions for a positive integer $n$, Hardy and Ramanujan \cite{hardy2000asymptotic} showed that \begin{align*}
	p(n)\sim \frac{1}{4n\sqrt{3}}e^{\pi\sqrt{\frac{2n}{3}}},
\end{align*}
as $n\rightarrow\infty$.

An expansion of this philosophy was introduced by Wright \cite{wright1971stacks} and has seen an increase in popularity in recent years. Wright's method obtains an asymptotic description for the Fourier coefficients of functions that possess suitable growth conditions around cusps. Roughly, one uses Cauchy's Theorem to interpret the Fourier coefficients as the integral of the generating function over a circle of radius less than one, and then divides the path of integration into two arcs, commonly referred to as the major and minor arcs. The major arc consists of neighbourhoods around singularities (or alternatively, roots of unity) where the generating function has relatively large growth, and the minor arc is where the generating function exhibits non-dominant asymptotic behaviour. Wright's work does not yield an exact asymptotic formula and therefore introduces weaker bounds compared to the full Circle Method, but it is often easier to work with. Wright's  variant of the Circle Method is a very flexible tool, and may be applied to a wide variety of both modular and non-modular generating functions that carry suitable analytic restraints. For our version, we focus on those functions that satisfy certain technical growth conditions given explicitly in Proposition \ref{P:Wright}. This builds on work of Bringmann, Craig, Ono, and the third author \cite{bringmann2022distributions} and results of Ngo and Rhoades \cite{Roades}.

\begin{theorem}\label{T:Hyperbolic}
Let $F(q) = \sum_{n=0}^\infty c(n)q^n$ be a function satisfying the conditions of Proposition \ref{P:Wright}. Define $C(n) = c(Kn)$ with $K \in \N$ as in Proposition \ref{P:Wright}. Then for each fixed $d\geq 1$, we have that the Jensen polynomial $J_C^{d,n}(X)$ is hyperbolic for all but finitely many $n$. Moreover, up to normalization, the Jensen polynomials $J_C^{d,n}(X)$ converge uniformly to the Hermite polynomial $H_d(X)$ uniformly over compact subsets of $\R$ as $n\rightarrow\infty$.
\end{theorem}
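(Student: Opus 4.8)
The plan is to reduce the statement to the abstract hyperbolicity criterion of Griffin--Ono--Rolen--Zagier \cite{griffin2019jensen}, whose hypotheses concern only the two leading terms, in the shift variable $j$, of the logarithmic ratio $\log\big(C(n+j)/C(n)\big)$. Concretely, that criterion asserts the following: if $C(n)>0$ for $n\gg 0$ and there are sequences $A(n)\in\R$ and $\delta(n)>0$ with $\delta(n)\to 0$ such that, writing
\[\log\frac{C(n+j)}{C(n)} = A(n)\,j - \delta(n)^2 j^2 + \sum_{i\geq 3} g_i(n)\,j^i,\]
one has $g_i(n) = o\big(\delta(n)^i\big)$ as $n\to\infty$ for every $i\geq 3$, then for each fixed $d\geq 1$ the renormalized Jensen polynomials converge uniformly on compact subsets of $\R$ to the Hermite polynomial $H_d$, and in particular $J_C^{d,n}(X)$ is hyperbolic for all but finitely many $n$. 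Thus the entire problem is to produce $A(n)$ and $\delta(n)$ with these properties from the analytic input of Proposition \ref{P:Wright}.

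First I would feed $F$ into Proposition \ref{P:Wright} to obtain a full asymptotic expansion for the extracted subsequence $C(n)=c(Kn)$, of the shape
\[C(n) \sim \lambda\, n^{\beta} e^{\gamma\sqrt{n}}\Big(1 + \sum_{k\geq 1} b_k\, n^{-k/2}\Big), \qquad \lambda,\gamma>0.\]
The passage to the arithmetic subsequence indexed by $K$ is what guarantees that $C(n)$ is eventually positive (so that the logarithm below is legitimate) and that a single dominant major-arc contribution controls the asymptotics. Taking logarithms yields an expansion in descending half-integer powers of $n$,
\[\log C(n) = \gamma\, n^{1/2} + \beta \log n + \log\lambda + \sum_{k\geq 1} d_k\, n^{-k/2}.\]

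Next I would compute $\log\big(C(n+j)/C(n)\big)$ by Taylor expanding each summand of $\log C(\,\cdot\,)$ about $n$ and collecting powers of $j$. The linear coefficient is $A(n) = \tfrac{\gamma}{2} n^{-1/2} + O(n^{-1})$ and the quadratic coefficient is $-\tfrac{\gamma}{8} n^{-3/2} + O(n^{-2})$, so the natural choice is $\delta(n) := \sqrt{\gamma/8}\; n^{-3/4}(1+o(1)) \asymp n^{-3/4}$, which tends to $0$ as required. More generally, the coefficient $g_i(n)$ of $j^i$ arises from the $i$-th derivative of $\log C$, hence satisfies $g_i(n) = O\big(n^{-(2i-1)/2}\big) = O(n^{-i+1/2})$. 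Since $\delta(n)^i \asymp n^{-3i/4}$ and $-i+\tfrac12 < -\tfrac{3i}{4}$ precisely when $i>2$, we obtain $g_i(n) = o\big(\delta(n)^i\big)$ for every $i\geq 3$. This verifies the hypotheses of the criterion, and invoking it gives both the uniform convergence to $H_d$ on compacta and the hyperbolicity of $J_C^{d,n}$ for all but finitely many $n$.

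The main obstacle is bookkeeping the error terms: the smallness conditions $g_i(n)=o(\delta(n)^i)$ require that the expansion of $C(n)$ be genuinely asymptotic to all orders in $n^{-1/2}$, with remainders uniform in the range $0\le j\le d$, rather than merely a leading-order estimate. Thus the crux is to extract from Proposition \ref{P:Wright} an expansion to sufficiently many terms and to justify Taylor expanding $\log C(n+j)$ with controlled remainder. Once the half-integer power structure of $\log C(n)$ is established, the comparison of the exponents $-i+\tfrac12$ against $-\tfrac{3i}{4}$ is automatic and requires no delicate cancellation.
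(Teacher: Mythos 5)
Your proposal is correct and follows essentially the same route as the paper: it extracts the asymptotic expansion of $C(n)$ from Proposition \ref{P:Wright} (with the $I$-Bessel functions expanded into exponentials via \eqref{Eqn: I bessel asymp}), takes logarithms to obtain the half-integer-power expansion of $\log\big(C(n+j)/C(n)\big)$ with $A(n)\asymp n^{-1/2}$, $\delta(n)\asymp n^{-3/4}$, and $g_i(n)=O\big(n^{\frac{1}{2}-i}\big)=o\big(\delta(n)^i\big)$ for $i\geq 3$, and then invokes Theorem \ref{T:GORZJensen}, exactly as in Section \ref{S:Proof}. If anything, you are slightly more careful than the paper on one point: the paper takes $N=1$ in Proposition \ref{P:Wright}, whereas, as you correctly flag, verifying the error bound $o(\delta(n)^d)$ for a general fixed $d$ requires retaining sufficiently many terms of the expansion (i.e., $N$ growing with $d$), which the hypotheses of Proposition \ref{P:Wright} make available.
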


We connect Theorem \ref{T:Hyperbolic} with higher order Tur\'an inequalities, which have been a topic of great interest as of late \cite{chen2019higher},\cite{craig2021note},\cite{o2022limits},\cite{ono2022turan}. One of the primary reasons is that the higher order Tur\'an inequalities are intimately connected with real entire functions of Laguerre--P\'olya class. Indeed, an alternative characterization of the Riemann Hypothesis is that the Riemann Xi function lies in the Laguerre--P\'olya class \cite{dimitrov1998higher},\cite{szego1948on} and that, if the Riemann Hypothesis were true, we must have that the Maclaurin coefficients satisfy all the higher order Tur\'an inequalities \cite{dimitrov1998higher},\cite{polya1914uber}.

We now describe how to obtain the higher order Tur\'an inequalities. First, a sequence $\{ a_n\}_{n\geq 0}$ is said to satisfy the second order Tur\'an inequality if \[a_n^2\geq a_{n-1}a_{n+1}\]for every $n\geq 1$. The second order Tur\'an inequality is also commonly referred to as log-concavity and has been well studied in many settings \cite{bringmann2019peak},\cite{cesana2021asymptotic},\cite{craig2021note},\cite{dawsey2019effective},\cite{desalvo2015log}. For higher orders, a classical theorem of Hermite states that a polynomial $J(X)$ with real coefficients is hyperbolic precisely when the associated Hankel matrix is positive-definite. The minors of the Hankel matrix provide a corresponding sequence of inequalities associated with the coefficients of the polynomial $J(X)$. By considering Jensen polynomials, the resulting inequalities are referred to as the higher order Tur\'an inequalities. Specifically, if $\beta_1, \ldots, \beta_n$ are the roots of $J(X)$, then let $S_0=n$ and\[S_k = \beta_1^k + \ldots + \beta_n^k\]for every $k\geq1$. Then, $J(X)$ is hyperbolic precisely when the Hankel matrix \[M_n(J) = \begin{bmatrix}S_0 & S_1 & S_2 & \ldots & S_{n-1}\\
S_1 & S_2 & S_3 & \ldots & S_n\\
S_2 & S_3 & S_4 & \ldots & S_{n+1}\\
\vdots & \vdots & \vdots & \vdots & \vdots\\
S_{n-1} & S_n & S_{n+1} & \ldots & S_{2n-2}
\end{bmatrix}\]is positive-definite for each $n\in\N$. The latter condition occurs precisely when every minor of each of the matrices $M_n(J)$ is positive, and so we obtain a collection of inequalities \[\Delta_k^{(n)} = \left|\begin{array}{ccc}
S_0 & \ldots & S_{k-1}\\
\vdots & \vdots & \vdots\\
S_{k-1} & \ldots & S_{2k-2}
\end{array}\right|\geq 0, \ \ \ \ \ \ \ \ \ \ \ \ \ \ \ 1\leq k \leq n.\]The resulting inequalities are the higher order Tur\'an inequalities and are completely determined by the coefficients of the polynomial $J(X)$. For more details, we refer the reader to \cite{craig2021note}. Our main contribution regarding higher order Tur\'an inequalities is then seen to be an immediate consequence to Theorem \ref{T:Hyperbolic} and results of Griffin, Ono, Rolen, and Zagier \cite{griffin2019jensen}. While unsurprising to experts, it appears to not be explicitly written in the literature, and so we record the result explicitly here.

\begin{theorem}\label{T:Turan}
Let $F(q) = \sum_{n=0}^\infty c(n)q^n$ be a function satisfying the conditions of Proposition \ref{P:Wright}. Let $C(n) = c(Kn)$ with $K \in \N$ as in Proposition \ref{P:Wright}. Then, the sequence of coefficients $C(n)$ asymptotically satisfy all of the higher order Tur\'an inequalities.
\end{theorem}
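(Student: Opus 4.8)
The plan is to deduce Theorem \ref{T:Turan} directly from Theorem \ref{T:Hyperbolic} together with the Hermite--Hankel characterization of hyperbolicity recalled in the introduction; the entire content is the translation between ``asymptotic hyperbolicity of the Jensen polynomials of every fixed degree'' and ``eventual validity of the higher order Tur\'an inequalities.'' I would begin by making this dictionary explicit for the sequence $\{C(n)\}$.

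First I would fix a degree $d\geq 1$ and consider $J_C^{d,n}(X) = \sum_{j=0}^d \binom{d}{j} C(n+j) X^j$, whose roots $\beta_1,\ldots,\beta_d$ determine the power sums $S_0 = d$, $S_k = \beta_1^k + \cdots + \beta_d^k$, and hence the $d\times d$ Hankel matrix $M_d(J_C^{d,n})$. By Hermite's theorem, $J_C^{d,n}(X)$ is hyperbolic precisely when $M_d(J_C^{d,n})$ is positive-definite, equivalently when each of its leading principal minors $\Delta_1,\ldots,\Delta_d$ is positive. Via Newton's identities the power sums $S_k$ are polynomials in the coefficients $\binom{d}{j}C(n+j)$, so each minor $\Delta_k$ is a fixed polynomial inequality in the consecutive values $C(n),\ldots,C(n+d)$ --- these are exactly the higher order Tur\'an inequalities attached to $J_C^{d,n}$ at shift $n$, as described in the introduction.

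Next I would apply Theorem \ref{T:Hyperbolic} for this fixed $d$: the polynomial $J_C^{d,n}(X)$ is hyperbolic for all but finitely many $n$, say for all $n\geq N_d$. By the equivalence above, every minor $\Delta_k$ ($1\leq k\leq d$) of $M_d(J_C^{d,n})$ is then positive for $n\geq N_d$, i.e.\ all of the higher order Tur\'an inequalities associated with degree $d$ hold once $n$ is large. Letting $d$ range over $\N$, the full collection of higher order Tur\'an inequalities (indexed by the degree $d$ and the minor $1\leq k\leq d$) is eventually satisfied by $\{C(n)\}$, which is precisely the conclusion of Theorem \ref{T:Turan}.

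The argument is formal, so I anticipate no real obstacle; the one point to handle with care is the gap between strict and non-strict positivity. Having all real roots corresponds only to positive-semidefiniteness and to the non-strict inequalities $\Delta_k\geq 0$, whereas positive-definiteness (strict minors) requires the roots to be real and distinct. This strictness is delivered for free by the second half of Theorem \ref{T:Hyperbolic}: up to normalization $J_C^{d,n}(X)$ converges to the Hermite polynomial $H_d(X)$, whose zeros are real and simple and whose Hankel matrix is therefore genuinely positive-definite, so each $\Delta_k$ is bounded away from zero and stays positive for large $n$ by continuity. Finally, I would flag that the threshold $N_d$ depends on $d$, which is exactly why Theorem \ref{T:Turan} is asymptotic in $n$ for each fixed degree rather than uniform over all degrees simultaneously.
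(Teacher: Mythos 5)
Your proposal is correct and is essentially the paper's own argument: the paper gives no separate proof of Theorem \ref{T:Turan}, stating explicitly in the introduction that it is an immediate consequence of Theorem \ref{T:Hyperbolic} together with the Hermite--Hankel characterization of hyperbolicity, which is exactly the dictionary you spell out. Your extra care about strict versus non-strict minors (resolved via convergence to $H_d(X)$, whose zeros are real and simple) and your remark that the threshold depends on the fixed degree $d$ are both sound and consistent with the paper's meaning of ``asymptotically.''
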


We end by applying our results to two classes of infinite products, each of which exhibit very different modular behaviour. To this end, we let $(a; q)_\infty :=\prod_{n\geq 0}(1-aq^n)$ denote the usual $q$-Pochhammer symbol. First, we consider 
\begin{align*}
	H_{r,t} (q) \coloneqq \frac{1}{\left(q^t;q^t\right)_\infty^r} = \sum_{n \geq 0} c_{r,t}(n) q^n,
\end{align*}
where $c_{r,t}(n)$ counts the number of $r$-coloured partitions into parts which are divisible by $t$. We remark that $H_{r,t}(q)$ is (up to a simple $q$-power) a modular form, and so falls under the framework provided by \cite{griffin2019jensen}. Moreover, the asymptotics of the coefficients $c_{r,t}(n)$ are well-known. Here, we make the result explicit as an example of the general theorems given in this paper.

\begin{theorem}\label{T:AppH}
Let $H_{r,t}(q):= \frac{1}{(q^t; q^t)_\infty^r} =  \sum_{n=0}^\infty c_{r,t}(n)q^n$ where $r$ and $t$ are positive integers. Define a sequence of integers by $C_{r,t}(n) = c_{r,t}(tn)$. Then, the following statements hold.
\begin{enumerate}
\item For each fixed $d\geq 1$, the Jensen polynomial $J_{C_{r,t}}^{d,n}(X)$ is hyperbolic for all but finitely many $n$. Moreover, up to normalization, the Jensen polynomials $J_{C_{r,t}}^{d,n}(X)$ converge uniformly to the Hermite polynomial $H_d(X)$ uniformly over compact subsets of $\R$ as $n\rightarrow\infty$.
\item The sequence of coefficients $C_{r,t}(n)$ asymptotically satisfy all higher order Tur\'an inequalities.
\end{enumerate}
\end{theorem}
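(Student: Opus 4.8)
The plan is to derive Theorem \ref{T:AppH} as a direct application of Theorems \ref{T:Hyperbolic} and \ref{T:Turan}; the entire content is to verify that $F(q) = H_{r,t}(q)$ satisfies the hypotheses of Proposition \ref{P:Wright} with scaling parameter $K = t$. The first observation is structural: since $(q^t;q^t)_\infty^{-r}$ depends on $q$ only through $q^t$, we may write $H_{r,t}(q) = G(q^t)$ where $G(Q) = (Q;Q)_\infty^{-r} = \sum_{n\geq 0} p_r(n) Q^n$ is the generating function for $r$-coloured partitions. Consequently $c_{r,t}(n) = 0$ unless $t \mid n$, and $C_{r,t}(n) = c_{r,t}(tn) = p_r(n)$. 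This both explains the choice $K = t$ and reduces the whole problem to the asymptotic analysis of the essentially modular function $G$.

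First I would locate the major arcs. Because $G$ has its dominant singularity at $Q = 1$, the function $H_{r,t}$ has dominant singularities exactly at the $t$-th roots of unity $q = e^{2\pi i j / t}$ for $0 \leq j < t$; as there are finitely many of these, the ``finitely many major arcs'' requirement of Proposition \ref{P:Wright} is met. Next, near each such root of unity I would establish the radial asymptotic using the modular transformation of the Dedekind eta function. Writing $q = e^{-\epsilon}$ with $\epsilon \to 0^+$ and using $\eta(-1/\tau) = \sqrt{-i\tau}\,\eta(\tau)$, the standard computation yields
\[
H_{r,t}(e^{-\epsilon}) \sim \left(\frac{t\epsilon}{2\pi}\right)^{r/2} \exp\!\left(\frac{r\pi^2}{6t\epsilon}\right), \qquad \epsilon \to 0^+,
\]
with an error that is exponentially suppressed; the same leading magnitude holds near each of the other $t$-th roots of unity after the substitution $q \mapsto e^{2\pi i j/t} q$. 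The modular transformation in fact supplies the full asymptotic expansion in powers of $\epsilon$ demanded by Proposition \ref{P:Wright}, so this step is bookkeeping rather than a genuine obstacle.

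The remaining hypothesis to check is the minor arc bound, namely that $|H_{r,t}(q)|$ is exponentially smaller than the major-arc growth once $q$ stays bounded away from the $t$-th roots of unity. This follows from the classical estimates on $|(Q;Q)_\infty|$ away from $Q = 1$ (equivalently, the absence of further cusps contributing at the dominant rate) and is standard for eta-quotients. I expect this minor arc estimate, together with pinning down the precise shape of the required asymptotic expansion, to be the main technical point, though modularity of $G$ makes every ingredient available. One subtlety worth flagging is that the $t$ major arcs all contribute at equal magnitude, so the $c_{r,t}(n)$ receive oscillatory contributions; these are reconciled by restricting to the progression $n \equiv 0 \pmod{t}$, i.e.\ by working with $C_{r,t}(n) = c_{r,t}(tn)$, which is exactly the role played by $K$ in Proposition \ref{P:Wright}.

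Once the hypotheses are verified, both parts are immediate: part (1) is the conclusion of Theorem \ref{T:Hyperbolic} applied to the sequence $C_{r,t}(n)$, and part (2) is the conclusion of Theorem \ref{T:Turan}.
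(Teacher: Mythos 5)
Your proposal follows essentially the same route as the paper: verify the hypotheses of Proposition \ref{P:Wright} for $H_{r,t}$ with $K=t$ (the symmetry $H_{r,t}(\zeta_t q)=H_{r,t}(q)$ giving the major arcs at $t$-th roots of unity), obtain the major-arc expansion from the Dedekind eta transformation, and then quote Theorems \ref{T:Hyperbolic} and \ref{T:Turan}. The one step you defer to ``classical estimates'' --- the minor-arc bound --- is exactly where the paper does its explicit work, adapting the Bringmann--Dousse argument to show $\left|\frac{q^t}{1-q^t}\right| - \frac{|q|^t}{1-|q|^t} < -\frac{C}{x}$ on the cone $Mx \leq y \leq \frac{2\pi}{t}-Mx$; since that argument is indeed standard and you correctly flagged it as the technical crux, your plan is sound.
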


Secondly, we consider the ubiquitous infinite product
\begin{align}
	G_{a,p}(q):= \frac{1}{(q^{a}; q^{p})_\infty} = \sum_{n=0}^\infty b_{a,p}(n)q^n,
\end{align}
where $p$ is prime and $0\leq a<p$. In contrast to $H_{r,t}(q)$, the function $G_{a,p}(q)$ is non-modular, and so we cannot appeal to modular transformation formulae to obtain bounds on major and minor arcs. We show that the function $G_{a,p}(q)$ satisfies the conditions of Wright's Circle Method (Proposition \ref{P:Wright}). Theorems \ref{T:Hyperbolic} and \ref{T:Turan} will then imply the following result. 

\begin{theorem}\label{T:App}
Let $G_{a,p}(q):= \frac{1}{(q^{a}; q^{p})_\infty} =  \sum_{n=0}^\infty b_{a,p}(n)q^n$ where $p$ is prime and $0\leq a<p$. Then, the following statements hold.
\begin{enumerate}
\item The coefficients $b_{a,p}(n)$ have the asymptotic behaviour
\begin{align*}
	b_{a,p}(n) \sim\frac{\Gamma\left(\frac{a}{p}\right) \left(\frac{a}{p}\right)^{\frac{1}{2}-\frac{a}{p}}p^{\frac{1}{2}-\frac{a}{p}}}{\sqrt{2\pi}}  \left(\frac{\pi^2}{6pn}\right)^{\frac{1}{2}\left(\frac{1}{2} - \frac{a}{p}\right)+\frac{1}{2}} I_{-\left(\frac{1}{2} - \frac{a}{p} +1\right)} \left( \frac{\pi}{p} \sqrt{\frac{2n}{3}}  \right) + O\left( e^{ \frac{\pi}{p} \sqrt{\frac{2n}{3}}} n^{-\left(\frac{1}{2}(\frac{1}{2}-\frac{a}{p})+1\right)} \right)
\end{align*}
as $n \to \infty$.
\item For each fixed $d\geq 1$, the Jensen polynomial $J_{b_{a,p}}^{d,n}(X)$ is hyperbolic for all but finitely many $n$. Moreover, up to normalization, the Jensen polynomials $J_{b_{a,p}}^{d,n}(X)$ converge uniformly to the Hermite polynomial $H_d(X)$ uniformly over compact subsets of $\R$ as $n\rightarrow\infty$.
\item The sequence of coefficients $b_{a,p}(n)$ asymptotically satisfy all higher order Tur\'an inequalities.
\end{enumerate}
\end{theorem}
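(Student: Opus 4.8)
The plan is to reduce everything to Proposition~\ref{P:Wright}: once $G_{a,p}$ is shown to satisfy its hypotheses, parts (2) and (3) are immediate from Theorems~\ref{T:Hyperbolic} and~\ref{T:Turan}, while part (1) is precisely the asymptotic that Wright's circle method outputs. So the whole theorem reduces to verifying the major-arc and minor-arc conditions of Proposition~\ref{P:Wright}, with dilation parameter $K=1$. Taking $K=1$ is justified: since $p$ is prime and $1\le a<p$ we have $\gcd(a,p)=1$, so the two smallest allowed parts $a$ and $a+p$ are coprime and, by the Sylvester--Frobenius theorem, every sufficiently large integer is a sum of parts $\equiv a\pmod p$; hence $b_{a,p}(n)>0$ for all large $n$, which is exactly what lets us take $C(n)=b_{a,p}(n)$. (The boundary case $a=0$ is excluded, since $(q^{0};q^{p})_\infty=0$; it corresponds to $H_{1,p}$, already covered by Theorem~\ref{T:AppH}.)

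For the major arc I set $q=e^{-w}$ and study $\log G_{a,p}(e^{-w})$ as $w\to 0$ in a cone about the positive real axis. Expanding the logarithm gives $\log G_{a,p}(e^{-w})=\sum_{m\ge 1}\tfrac1m\tfrac{e^{-amw}}{1-e^{-pmw}}$, whose Mellin transform in $w$ is $\Gamma(s)\,\zeta(s+1)\,p^{-s}\,\zeta(s,a/p)$, with $\zeta(s,a/p)$ the Hurwitz zeta function. Shifting the contour and collecting the residue at the simple pole $s=1$ (coming from $\zeta(s,a/p)$) and at the double pole $s=0$ (coming from $\Gamma(s)\zeta(s+1)$) yields an expansion with leading singular term $\tfrac{\pi^{2}}{6p}\,w^{-1}$, a logarithmic term, and a constant evaluated through Lerch's formula $\zeta'(0,a/p)=\log\!\big(\Gamma(a/p)/\sqrt{2\pi}\big)$. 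Exponentiating places $G_{a,p}(e^{-w})$ in the required major-arc form $w^{B}e^{A/w}(\alpha_{0}+O(w))$, with $A=\pi^{2}/(6p)$, $B=\tfrac{a}{p}-\tfrac12$, and $\alpha_{0}$ reproducing the $\Gamma(a/p)$ prefactor of part (1).

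The real work, and the main obstacle, is the minor-arc bound, precisely because $G_{a,p}$ is non-modular, so no $\eta$-type transformation is available to control it near a root of unity. On $w=\epsilon+iy$ with $\epsilon\asymp n^{-1/2}$ I estimate $\log|G_{a,p}(e^{-w})|=\operatorname{Re}\sum_{m\ge 1}\tfrac1m\tfrac{e^{-amw}}{1-e^{-pmw}}$ directly. The key point is that the $w^{-1}$ growth is produced only by those $m$ for which $e^{-pmw}\approx 1$, that is, by $y$ close to a root of unity; for $y$ bounded away from the relevant roots of unity the denominators $1-e^{-pmw}$ stay bounded below, the sum is $O(\log(1/\epsilon))$, and $|G_{a,p}|$ is exponentially negligible against the major arc. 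Near a primitive $k$-th root of unity a local version of the Mellin computation produces a leading $w^{-1}$ coefficient of the form $\tfrac{1}{pk^{2}}\operatorname{Re}\operatorname{Li}_2(\zeta)$, strictly smaller than $A$, so these finitely many neighbourhoods are treated as subdominant major arcs and assemble into the error term. Making this dichotomy uniform and quantitative in $(\epsilon,y)$ --- a direct cancellation estimate for the phases $e^{-i(a+pj)y}$ rather than a transformation law --- is the delicate step.

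With the major-arc expansion and the minor-arc bound established, Proposition~\ref{P:Wright} applies and yields the Bessel-type asymptotic of part (1). Parts (2) and (3) then follow immediately by invoking Theorems~\ref{T:Hyperbolic} and~\ref{T:Turan} with $C(n)=b_{a,p}(n)$.
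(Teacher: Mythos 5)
Your skeleton is the paper's: verify conditions (1) and (2) of Proposition \ref{P:Wright} for $G_{a,p}$ with $K=1$, then quote Theorems \ref{T:Hyperbolic} and \ref{T:Turan}. Where you genuinely differ is the engine for the major arc: you take a Mellin transform, getting $\Gamma(s)\zeta(s+1)p^{-s}\zeta(s,a/p)$, and collect the residues at $s=1$ and at the double pole $s=0$ via Lerch's formula for $\zeta'(0,a/p)$; the paper instead splits the logarithm into residue classes, $\Log G_{a,p}(\zeta_p^h q)=\sum_{\alpha=1}^p\zeta_p^{ha\alpha}\,z\sum_{\ell\geq0}f\left(pz\left(\ell+\tfrac{\alpha}{p}\right)\right)$ with $f(x)=\frac{e^{-ax}}{x(1-e^{-px})}$, and applies the Euler--Maclaurin Lemma \ref{L:EM}, evaluating the constant through a Binet-type integral $I^*_{f,a}$. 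The two computations are interchangeable, but the paper's has a structural payoff you should note: the same $\alpha$-decomposition evaluated at every $\zeta_p^h$ handles the other roots of unity, since for $h\neq p$ primality gives $\sum_{\alpha=1}^p\zeta_p^{ha\alpha}=0$, the logarithmic terms drop out, and the $1/z$-coefficient becomes $\zeta_p^{ha}\Phi(\zeta_p^{ha},2,1)=\mathrm{Li}_2(\zeta_p^{ha})$ with $\operatorname{Re}\,\mathrm{Li}_2(\zeta_p^{ha})=\frac{\pi^2}{6}-\frac{\pi^2 ha}{p}+\frac{\pi^2(ha)^2}{p^2}<\frac{\pi^2}{6}$ --- exactly your $\operatorname{Re}\,\mathrm{Li}_2$ observation. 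As for what you call ``the delicate step'' and leave unexecuted: the paper closes it not by a phase-cancellation analysis of the exponentials $e^{-i(a+pj)y}$ but by the triangle-inequality device from Theorem \ref{T:AppH}, namely $\left\lvert\Log G_{a,p}(q)\right\rvert\leq\left\lvert\frac{q^a}{1-q^p}\right\rvert-\frac{\lvert q\rvert^a}{1-\lvert q\rvert^p}+\Log G_{a,p}(\lvert q\rvert)$ together with $\frac{1}{\lvert 1-q^p\rvert}-\frac{1}{1-\lvert q\rvert^p}\leq\frac{1}{xp}\left(\frac{1}{\sqrt{M^2+1}}-1\right)+O(1)$ on the cone away from $p$-th roots of unity. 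This sidesteps entirely the issue you would face directly, that for any fixed $y$ infinitely many $m$ have $1-e^{-pmw}$ small; only the $k=1$ term needs individual treatment. So the one real gap in your write-up is filled by a half-page elementary estimate, and your minor-arc strategy otherwise coincides with the paper's.

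One place where you were too quick, however, is your assertion that your $\alpha_0$ ``reproduces the $\Gamma(a/p)$ prefactor of part (1).'' It does not: your Mellin computation yields $B=\frac{a}{p}-\frac12$ and $G_{a,p}(e^{-w})\sim\frac{\Gamma(a/p)}{\sqrt{2\pi}}(pw)^{\frac{a}{p}-\frac12}e^{\frac{\pi^2}{6pw}}$, whereas the paper's proof and the printed part (1) carry $B=\frac12-\frac{a}{p}$ together with an extra factor $\left(\frac{a}{p}\right)^{\frac12-\frac{a}{p}}$. You should flag this mismatch rather than paper over it, because your version is the correct one. Sanity checks: partitions into parts $\equiv 1\pmod 3$ vastly outnumber those into parts $\equiv 2\pmod 3$, which your $B$ respects ($n^{-2/3}$ versus $n^{-5/6}$ polynomial factors) and the paper's reverses; and a direct numerical evaluation of $\Log G_{1,3}(e^{-1/100})$ matches your constant, not the printed one. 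The slips in the paper's derivation are traceable: $-\left(\frac12-\frac{a}{p}\right)\Log(pz)$ is exponentiated with the wrong sign to $(pz)^{\frac12-\frac{a}{p}}$, and factors of $p$ are mishandled in the $\alpha$-sum and in the substitution defining $I^*_{f,a}$ (only $\psi(1)+\gamma=0$ is invoked where Gauss's $\sum_{\alpha=1}^p\psi(\alpha/p)=-p(\gamma+\log p)$ is what actually enters; this is where the spurious $\left(\frac ap\right)^{\frac12-\frac ap}$ survives). Relatedly, with $A=\frac{\pi^2}{6p}$ and $K=1$ the Bessel argument output by Proposition \ref{P:Wright} is $2\sqrt{An}=\pi\sqrt{\frac{2n}{3p}}$, not the printed $\frac{\pi}{p}\sqrt{\frac{2n}{3}}$; the case $a=1$, $p=2$ (distinct parts, $q(n)\sim\frac{e^{\pi\sqrt{n/3}}}{4\cdot 3^{1/4}n^{3/4}}$) falsifies the printed exponent. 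So your proposal, completed, proves a corrected form of part (1); parts (2) and (3) are unaffected since they only use the structural shape $z^Be^{A/z}(\alpha_0+O(z))$. Finally, your exclusion of $a=0$ is a good catch (the product degenerates and $\Gamma(a/p)$ is undefined, so the theorem's ``$0\leq a$'' is itself an oversight), while the Sylvester--Frobenius justification of $K=1$, though sound, is unnecessary: the symmetry hypothesis is vacuous for $K=1$, and the eventual positivity needed for Theorem \ref{T:GORZJensen} already follows from the asymptotic main term.
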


\begin{remark}
	The function $G_{a,p}(q)$ is a subclass of functions that were recently studied in a beautiful paper of Chern \cite{chern2019nonmodular}. In \cite[Theorem 1.1]{chern2019nonmodular}, Chern obtained the main-term asymptotic toward any root of unity using different methods and many technical bounds. In the present paper, we apply our techniques and obtain a more precise asymptotic formula for the coefficients of $G_{a,p}(q)$, which then allows us to conclude asymptotic hyperbolicity of the associated Jensen polynomial.
\end{remark}

We provide a brief outline of our work. In Section \ref{S:Prelim}, we outline a few of our primary tools. In Section \ref{S:Proof}, we prove Theorem \ref{T:Hyperbolic} and provide an alternative proof for our version of Wright's circle method. In Section \ref{S:App}, we apply our work to the functions $H_{r,t}(q)$ and $G_{a,p}(q)$ by proving Theorems \ref{T:AppH} and \ref{T:App}.

$\left.\right.$\\
{\bf Acknowledgments.} We are grateful for the support of the Manitoba eXperimental Mathematics Laboratory.
\vspace{-2mm}

\section{Preliminaries}\label{S:Prelim}

\subsection{Wright's Circle Method}\label{SS:Wright}

The Circle Method of Hardy-Ramanujan constitutes a powerful method to obtain asymptotic formulae (and even exact formulae) for generating functions that carry suitable growth conditions at all cusps. Wright \cite{wright1971stacks} provided a variant that has proven to be popular within the modern canon. We provide a minor modification to that presented in \cite[Proposition 4.4]{bringmann2022distributions} to account for analytic functions enjoying a certain symmetry. Throughout, we will let $I_v(z)$ denote the $I$-Bessel function and $\zeta_K \coloneqq e^{\frac{2 \pi i}{K}}$ where $K \in \N$.
\begin{prop}\label{P:Wright}
Suppose that $F(q)$ is analytic for $q = e^{-z}$ where $z = x+iy\in\C$ satisfies $x>0$ and $|y|<\pi$, and suppose that $F(q)$ has an expansion $F(q) = \sum_{n=0}^\infty c(n)q^n$ near $1$. Let $K, M, N>0$ be fixed constants. Suppose that $F(\zeta_Kq) = F(q)$ and consider the following hypotheses:\begin{enumerate}
\item As $z\rightarrow 0$ in the bounded cone $ 0 \leq y \leq Mx$ (major arc), we have \[ F(e^{-z}) = z^B e^{\frac{A}{z}}\left( \sum_{j=0}^{N-1}\alpha_j z^j + O_M(|z|^N) \right),\]where $\alpha_j\in\C, A\in\R^+$, and $B\in\R^+.$
\item As $z\rightarrow 0$ in the bounded cone $Mx\leq y\leq \frac{2\pi}{K} - Mx$ (minor arc), we have \[ |F(e^{-z})| \ll_M e^{\frac{1}{\text{Re}(z)}(A-\kappa)},\]for some $\kappa\in\R^+.$
\end{enumerate}
Define a sequence $C(n) \coloneqq c(Kn)$. Then as $n\rightarrow\infty$ we have that,
\begin{align*}
	C(n) =\sum_{j=0}^{N-1}\(\alpha_j \(\frac{A}{Kn}\)^{\frac{1}{2}(j+B+1)}I_{-(j+B+1)}(2\sqrt{AKn})\) + O\( e^{2\sqrt{AKn}}n^{-\frac{1}{2}(N+B+1)} \).
\end{align*}
\end{prop}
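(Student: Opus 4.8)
The plan is to carry out the classical saddle-point implementation of Wright's circle method. By Cauchy's theorem, for a radius $x>0$ to be chosen one has
\[
c(m) = \frac{1}{2\pi}\int_{-\pi}^{\pi} F\(e^{-z}\)\, e^{mz}\,dy,\qquad z = x+iy,
\]
the contour being the circle $|q|=e^{-x}$. Setting $m=Kn$ and invoking the symmetry $F(\zeta_K q)=F(q)$ — which forces $c(n)=0$ unless $K\mid n$ and renders the integrand periodic of period $\frac{2\pi}{K}$ in $y$ — the integral is determined by its behaviour on a single fundamental arc surrounding $z\to 0$. On this arc I would split the range into the major arc $|y|\le Mx$ and the complementary minor arc, which are exactly the two cones appearing in the hypotheses of Proposition \ref{P:Wright}. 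The decisive choice is to take the radius at the saddle point $x=\sqrt{A/(Kn)}$, which balances the factors $e^{A/z}$ and $e^{Knz}$ so that the exponent at $z=x$ equals $2\sqrt{AKn}$, the argument of the Bessel functions in the claim.

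On the major arc I would insert the expansion from hypothesis (1),
\[
F\(e^{-z}\) = z^{B} e^{\frac{A}{z}}\(\sum_{j=0}^{N-1}\alpha_j z^{j} + O_M\(|z|^{N}\)\),
\]
reducing the main contribution to a finite sum of integrals of the form $\int z^{B+j} e^{\frac{A}{z}+Knz}\,dy$. I would complete each of these to the full vertical line $\operatorname{Re}(z)=x$ and evaluate it exactly through the Hankel-type Bessel identity
\[
\frac{1}{2\pi i}\int_{x-i\infty}^{x+i\infty} z^{-s}\, e^{\frac{w}{z}+z}\,dz = w^{-\frac{s-1}{2}}\, I_{s-1}\(2\sqrt{w}\),
\]
which follows by expanding $e^{w/z}$ and applying Hankel's formula for $1/\Gamma$ term by term. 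After the substitution $z=u/(Kn)$ this yields precisely the factors $\(\frac{A}{Kn}\)^{\frac12(j+B+1)} I_{-(j+B+1)}\(2\sqrt{AKn}\)$ attached to each $\alpha_j$, which are the stated main terms. The remainder $O_M(|z|^N)$ is handled by the same identity with index $N$; since $I_\nu(x)\ll x^{-\frac12}e^{x}$ as $x\to\infty$, it contributes at most $O\(e^{2\sqrt{AKn}}\, n^{-\frac12(N+B+1)}\)$, which is the error term in the claim.

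It remains to absorb the discarded pieces — the minor arc, and the tails $|y|>Mx$ produced when completing the major-arc integrals to the whole line — into this error. On the minor arc, hypothesis (2) gives $|F(e^{-z})|\ll e^{\frac{1}{x}(A-\kappa)}$, so the integrand is bounded by $e^{\frac{A-\kappa}{x}+Knx}$; at $x=\sqrt{A/(Kn)}$ this is $e^{2\sqrt{AKn}-\kappa\sqrt{Kn/A}}$, exponentially smaller than every main term. For the tails, on $\operatorname{Re}(z)=x$ with $|y|>Mx$ one has $\operatorname{Re}(1/z)=x/(x^2+y^2)\le \frac{1}{x(1+M^2)}$, whence the exponent is at most $\sqrt{AKn}\(1+\frac{1}{1+M^2}\)<2\sqrt{AKn}$, again exponentially negligible once integrated against the convergent polynomial factor $|z|^{B+j}$. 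The main obstacle is precisely this error analysis carried out uniformly in $n$: justifying the termwise use of the Bessel identity on the completed contour, bounding the $O_M(|z|^N)$ remainder uniformly along it, and confirming that the tail and minor-arc contributions all fall below $O\(e^{2\sqrt{AKn}}\,n^{-\frac12(N+B+1)}\)$ simultaneously.
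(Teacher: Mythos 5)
Your global structure --- Cauchy's theorem on the circle of saddle radius $x=\sqrt{A/(Kn)}$, the use of the symmetry $F(\zeta_Kq)=F(q)$ to reduce to $K$ copies of the arc at $q=1$, the minor-arc bound $e^{2\sqrt{AKn}-\kappa\sqrt{Kn/A}}$, and an error of size $O\bigl(e^{2\sqrt{AKn}}n^{-\frac12(N+B+1)}\bigr)$ from the $O_M(|z|^N)$ remainder --- coincides with the paper's proof. But the central step of your major-arc analysis fails. The identity
\[
\frac{1}{2\pi i}\int_{x-i\infty}^{x+i\infty} z^{-s}\, e^{\frac{w}{z}+z}\,dz = w^{-\frac{s-1}{2}}\, I_{s-1}\left(2\sqrt{w}\right)
\]
is valid only for $\operatorname{Re}(s)>0$: one needs $z^{-s}$ to decay along the vertical line, and the term-by-term use of Hankel's formula likewise requires $\operatorname{Re}(s+k)>0$ for every $k\geq 0$. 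In your application the main terms carry the factor $z^{B+j}$, i.e.\ $s=-(j+B)\leq -B<0$, matching the \emph{negative} Bessel order $-(j+B+1)$ appearing in the statement. On $\operatorname{Re}(z)=x$ with $|y|\to\infty$ one has $\bigl|z^{B+j}e^{\frac{A}{z}+Knz}\bigr|\sim |y|^{B+j}e^{Knx}$, so the completed integral \emph{diverges}: there is no vertical-line representation of $I_{-(j+B+1)}$. For the same reason your tail estimate collapses --- the ``convergent polynomial factor'' is not convergent, since $\int_{|y|>Mx}|y|^{B+j}\,dy=\infty$; the bound $\operatorname{Re}(1/z)\leq\frac{1}{x(1+M^2)}$ controls only the exponential, not the integrability. (Relatedly, an $O$-term cannot be pushed through an exact identity, though the direct estimate --- arc length $\approx\lambda$ and $|e^{\frac{A}{z}+Knz}|\leq e^{2\sqrt{AKn}}$ --- does give your stated error.)

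This is precisely the delicate point of the proposition; the paper's footnote flags a closely related misstep in Ngo--Rhoades. The repair is to keep the \emph{truncated} segment and, after rescaling $z\mapsto\lambda z$, compare
\[
A_j(n) = \frac{K \lambda^{j+B+1}}{2\pi i}\int_{1-iM}^{1+iM}z^{j+B}e^{\sqrt{AKn}\left(z+\frac{1}{z}\right)}dz
\]
not with a vertical line but with the Hankel-type loop representation of the $I$-Bessel function, taken over a contour encircling the negative real axis along which $e^{\sqrt{AKn}\,v}$ decays, so that the loop integral converges for every order. This is \cite[Lemma 4.2]{bringmann2016dyson}, which the paper invokes and which yields
\[
A_j(n)=\lambda^{j+B+1}\left(I_{-(j+B+1)}\left(2\sqrt{AKn}\right)+O\left(e^{\sqrt{AKn}\left(1+\frac{1}{1+M^2}\right)}\right)\right),
\]
the error arising from the connecting pieces, on which $\operatorname{Re}\left(v+\frac{1}{v}\right)\leq 1+\frac{1}{1+M^2}$ --- the same exponent you correctly identified, but now attached to contour pieces of finite effective contribution. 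With this substitution your argument closes; as written, the main terms of the proposition are never actually produced.
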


The form of Proposition \ref{P:Wright} stated in \cite[Proposition 4.4]{bringmann2022distributions} is the case for when there is a single major arc near $q=1$. Accordingly, in Section \ref{S:Proof}, we record the additional details and provide an alternative proof to that presented in \cite{bringmann2022distributions}. Moreover, we show that the sequence of Jensen polynomials corresponding to the coefficients $C(n)$ are hyperbolic for all but finitely many $n$. Finally, in Section \ref{S:App}, we verify conditions (1) and (2) above for a concrete class of functions. 

We record one additional fact regarding the $I$-Bessel function that will be needed in the proof of Theorem \ref{T:Hyperbolic}. That is, one can obtain an expansion in terms of exponentials by using the well-known asymptotic of the $I$-Bessel function \cite[10.40.1]{NIST:DLMF} as follows
\begin{align}\label{Eqn: I bessel asymp}
	 I_v(z)\sim \frac{e^z}{(2\pi z)^{\frac{1}{2}}} \sum_{k=0}^\infty (-1)^k \frac{a_k(v)}{z^k}, \qquad  \lvert \arg{z} \vert \leq \frac{\pi}{2}-\delta \text{ where } \delta>0,
\end{align}
where
\begin{align*}
	a_k(v) \coloneqq \frac{\left(\frac{1}{2} -v\right)_k \left(\frac{1}{2}+v\right)_k}{  (-2)^k k!},
\end{align*}
with $(v)_k \coloneqq v(v+1)\cdots(v+k)$ denoting the usual Pochhammer symbol.

\subsection{Hyperbolicity of Jensen Polynomials}\label{SS:GORZ}

Recently, it was unveiled that a large class of Jensen polynomials become Hermite polynomials asymptotically \cite{griffin2019jensen}. The Hermite polynomials $H_d(X)$, which are defined by the generating function $e^{-t^2+Xt}=\sum_{d=0}^\infty H_d(X) \frac{t^d}{d!}$, are hyperbolic and so this provides us a method for obtaining asymptotic hyperbolicity of certain Jensen polynomials. This approach was successfully implemented in \cite{griffin2019jensen} to provide asymptotic hyperbolicity of Jensen polynomials associated with the Riemann zeta function. Their main tool is the following \cite[Theorem 3]{griffin2019jensen} (their result is misstated but it is correctly stated in \cite[Proof of Theorem 1]{griffin2019jensen}).

\begin{theorem}\label{T:GORZJensen}
Let $\{\alpha(n)\}, \{ A(n)\}$ and $\{\delta(n)\}$ be sequences of positive numbers with $\delta(n)$ converging to zero. Further, suppose that \[\log\(\frac{\alpha(n+k)}{\alpha(n)}\) = A(n)k - \delta(n)^2k^2 + \sum_{j=3}^d g_j(n)k^j + o(\delta(n)^d) \ \ \ \ \ \ \text{as } n\rightarrow\infty,\]where $d\in\N, 0\leq k\leq d$ and $g_i(n) = o\(\delta(n)^i\)$. Then, we have that \[ \frac{\delta(n)^{-d}}{\alpha(n)} J_\alpha^{d,n}\( \frac{\delta(n)X-1}{\text{exp}(A(n))}\) \rightarrow H_d(X) \ \ \ \ \ \text{as } n\rightarrow\infty,\]uniformly in $X$ for every compact subset of $\R$. Moreover, $J_\alpha^{d,n}(X)$ is hyperbolic for all but finitely many $n$.
\end{theorem}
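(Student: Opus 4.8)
The plan is to prove the limit formula by directly computing the (finitely many) coefficients of the rescaled polynomial, and then to deduce hyperbolicity from the known hyperbolicity of $H_d$ via a continuity-of-roots argument. First I would note that the map $X\mapsto (\delta(n)X-1)/\exp(A(n))$ is an affine bijection of $\R$ and that $\delta(n)^{-d}/\alpha(n)$ is a nonzero scalar, so both preserve hyperbolicity; hence $J_\alpha^{d,n}$ is hyperbolic exactly when the rescaled polynomial is, and it suffices to analyse
\[
\widehat J^{d,n}(X):=\frac{\delta(n)^{-d}}{\alpha(n)}J_\alpha^{d,n}\!\left(\frac{\delta(n)X-1}{\exp(A(n))}\right)=\delta(n)^{-d}\sum_{j=0}^d\binom{d}{j}\frac{\alpha(n+j)}{\alpha(n)}\frac{(\delta(n)X-1)^j}{\exp(jA(n))}.
\]
Writing $\delta=\delta(n)$ and inserting the hypothesis, the factor $\alpha(n+j)/(\alpha(n)\exp(jA(n)))$ equals $\exp(\phi_j)$ with $\phi_j=-\delta^2 j^2+\sum_{i=3}^d g_i(n)j^i+o(\delta^d)$, the error being uniform in $0\le j\le d$.

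Next I would expand $\exp(\phi_j)=\exp(-\delta^2 j^2)\exp(\sum_{i\ge3}g_i j^i)(1+o(\delta^d))$ and $\exp(-\delta^2 j^2)=\sum_{m\ge0}(-\delta^2)^m j^{2m}/m!$, and substitute into the display. The heart of the matter is the power-of-$\delta$ bookkeeping: for the coefficient of $X^{d-2m}$ only the pure term coming from $m$ factors of $-\delta^2 j^2$ survives the normalisation $\delta^{-d}$. Concretely, I would use that $\sum_{j}\binom{d}{j}j^{2m}(\delta X-1)^j=((u-1)\partial_u)^{2m}u^d\big|_{u=\delta X}$ has lowest-order term $\frac{d!}{(d-2m)!}(\delta X)^{d-2m}$ as $\delta\to0$—the vanishing of all lower powers of $\delta$ being exactly the finite-difference identity $\sum_k\binom{d}{k}(-1)^k k^{\,n}=0$ for $0\le n<d$. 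After multiplying by $\delta^{-d}(-\delta^2)^m/m!$ the powers of $\delta$ cancel precisely, leaving $\frac{(-1)^m d!}{m!(d-2m)!}X^{d-2m}$, and summing over $m$ reproduces $H_d(X)=d!\sum_{m}\frac{(-1)^m}{m!(d-2m)!}X^{d-2m}$ read off from the generating function $e^{-t^2+Xt}$. Since only boundedly many coefficients are involved, coefficientwise convergence upgrades automatically to uniform convergence on compact subsets of $\R$.

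The main obstacle is controlling all the discarded contributions, and this is where I expect to spend most of the effort. I must verify that: (i) every term of $\sum_j\binom{d}{j}j^{2m}(\delta X-1)^j$ beyond the leading $(\delta X)^{d-2m}$ is $O(\delta^{d-2m+1})$ and so dies after the $\delta^{-d}$ scaling; (ii) the terms with $2m>d$ carry a net factor $\delta^{2m-d}\to0$; (iii) the higher-order factor $\exp(\sum_{i\ge3}g_i j^i)-1=O(g_3)=o(\delta^3)$ contributes $o(\delta^3)\cdot\delta^{-3}=o(1)$ after the analogous count; and (iv) the uniform $o(\delta^d)$ error multiplies an expression that is itself $O(\delta^d)$ (the unnormalised sum being $\delta^d$ times a convergent quantity), hence also vanishes. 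All four reductions rest on the same finite-difference vanishing and on tracking the lowest power of $\delta$. Finally, to pass from the limit to hyperbolicity I would invoke that $H_d$ has $d$ \emph{distinct} real zeros; by Hurwitz's theorem—or elementarily, by evaluating $\widehat J^{d,n}$ at points interlacing the roots of $H_d$ and applying the intermediate value theorem—uniform convergence forces $\widehat J^{d,n}$, and therefore $J_\alpha^{d,n}$, to have $d$ real zeros for all sufficiently large $n$, that is, for all but finitely many $n$.
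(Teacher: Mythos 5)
The paper does not prove this statement at all: it is imported (in corrected form) as Theorem 3 of Griffin--Ono--Rolen--Zagier \cite{griffin2019jensen}, and your blind argument is essentially that source's proof --- the same expansion of the exponential, the same extraction of the coefficient $\frac{(-1)^m d!}{m!(d-2m)!}$ of $X^{d-2m}$ via the finite-difference vanishing $\sum_{k=0}^d \binom{d}{k}(-1)^k k^m = 0$ for $m<d$, and the same passage from uniform convergence to hyperbolicity using the simplicity of the zeros of $H_d$ and Hurwitz's theorem --- so there is no methodological divergence to report. Your plan is correct as written; the only two spots requiring the care you already flag are step (iii), where the shorthand $o(\delta^3)\cdot\delta^{-3}$ must be replaced for $d>3$ by the count for each monomial $g_i j^i(-\delta^2 j^2)^m/m!$, which contributes $o(\delta^{i+2m})\cdot O(\delta^{\max(d-i-2m,0)}) = o(\delta^{\min(i+2m,d)})$ before the $\delta^{-d}$ rescaling, and step (iv), where the $j$-dependent multiplicative errors should be bounded by their maximum against the absolute-value sum (which is only $O(1)$, not $O(\delta^d)$, since absolute values destroy the cancellation), still yielding $o(\delta^d)\cdot O(1)\cdot \delta^{-d} = o(1)$.
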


We will require Theorem \ref{T:GORZJensen} in order to verify that the sequence of coefficients $C(n)$ found within Proposition \ref{P:Wright} yield asymptotically hyperbolic Jensen polynomials.

\subsection{The Euler-Maclaurin summation formula}\label{SS:EM}

To prove Theorem \ref{T:App}, we will require one version of the Euler-Maclaurin summation formula that was proven in \cite[Lemma 2.2]{bringmann2022distributions}. This allows us to obtain precise asymptotics for the classes of functions $H_{r,t}(q)$ and $G_{a,p}(q)$ close to certain roots of unity. For these purposes, we introduce additional notation.

If $s,z\in\C$ are such that $\re{s}>1$ and $\re{z}>0$, then the \emph{Hurwitz zeta function} is defined by $\zeta(s,z):= \sum_{n=0}^\infty \frac{1}{(n+z)^s}$, the \emph{digamma function} is defined by $\psi(x):=\frac{\Gamma'(x)}{\Gamma(x)}$, and the Euler-Mascheroni constant is denoted by $\gamma$. Moreover, the \emph{$n$-th Bernoulli polynomial} $B_n(x)$ is determined by the generating function $\frac{te^{xt}}{e^t-1} = \sum_{n=0}^\infty B_n(x)\frac{t^n}{n!}.$ We say that a function $f$ defined on a domain in $\C$ is of \emph{sufficient decay} provided that there is some $\varepsilon>0$ such that $f(w)\ll w^{-1-\varepsilon}$ as $|w|\rightarrow\infty$ in the domain. Additionally, define \[D_\theta := \{ z=re^{i\alpha} : r\geq 0, |\alpha|\leq \theta\}\]where $0\leq\theta<\frac{\pi}{2}.$ 

\begin{lemma}\label{L:EM}
Let $A\in\R^+$, $0<a\leq 1$, and $0\leq\theta<\frac{\pi}{2}.$ Assume that there is $n_0\in\Z$ such that $f(z)\sim\sum_{n=n_0}^\infty d(n)z^n$ as $z\rightarrow0$ in $D_\theta$. Furthermore, assume that $f$ and all of its derivatives are of sufficient decay in $D_\theta$. Then, \[ \sum_{n=0}^\infty f(z(n+a))\sim \sum_{n=n_0}^\infty d(n)\zeta(-n, a)z^n + \frac{I_{f,A}^*}{z} - \frac{d(-1)}{z}\(\Log(Az) + \psi(a) + \gamma\) - \sum_{n=0}^\infty d(n)\frac{B_{n+1}(a)}{n+1}z^n,\]as $z\rightarrow0$ uniformly in $D_\theta$, where \[ I_{f,A}^* := \int_0^\infty \( f(u) - \sum_{n=n_0}^{-2}d(n)u^n - \frac{d(-1)e^{-Au}}{u}\)du.\]
\end{lemma}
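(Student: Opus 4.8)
The plan is to prove the expansion by the Mellin-transform method. Write $S(z):=\sum_{n=0}^\infty f(z(n+a))$ and set $\mathcal{M}f(s):=\int_0^\infty f(u)u^{s-1}\,du$. The sufficient-decay hypothesis makes this integral converge on the right part of some vertical strip, while the expansion $f(u)\sim\sum_{n\ge n_0}d(n)u^n$ near $0$ forces a meromorphic continuation to $\C$ whose only singularities are simple poles at $s=-n$ (for $n\ge n_0$) with residue $d(n)$. Since $\sum_{n\ge 0}(n+a)^{-s}=\zeta(s,a)$ for $\re{s}>1$, applying Mellin inversion to each summand and interchanging sum and integral gives
\[
S(z)=\frac{1}{2\pi i}\int_{(c)}\mathcal{M}f(s)\,\zeta(s,a)\,z^{-s}\,ds
\]
on a line $\re{s}=c>1$ lying to the right of every pole. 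When $n_0\le -2$ the naive fundamental strip is empty; there I would first subtract the finitely many monomials $d(n)u^n$ ($n_0\le n\le -1$) and evaluate their contribution to $S(z)$ directly in terms of $\zeta(s,a)$, so that the leftover function has a genuine strip and the representation above is legitimate.

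Next I would push the contour to a far-left line $\re{s}=-M$ and collect residues. Convergence of the vertical integrals and the uniformity over $D_\theta$ both rest on the analyticity of $f$ in the sector: it forces $\mathcal{M}f(s)$ to decay exponentially as $|\im{s}|\to\infty$, at a rate exceeding the $e^{(\arg z)\im{s}}$ growth of $z^{-s}$ because $|\arg z|\le\theta<\frac\pi2$; together with the polynomial growth of $\zeta(s,a)$ in vertical strips this lets $M\to\infty$ and pins down the error terms. The simple poles of $\mathcal{M}f$ at $s=-n$ with $n\ne -1$ each contribute $d(n)\zeta(-n,a)z^n$. For the indices $n_0\le n\le -2$ this yields the Hurwitz-zeta terms, while for $n\ge 0$ the evaluation $\zeta(-n,a)=-B_{n+1}(a)/(n+1)$ rewrites the contribution as $-\sum_{n\ge 0}d(n)\frac{B_{n+1}(a)}{n+1}z^n$, which is the Bernoulli sum in the statement.

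The crux is the point $s=1$, where the pole of $\zeta(s,a)$ collides with the pole of $\mathcal{M}f$ coming from the index $n=-1$, creating a \emph{double} pole. Expanding the three factors as $\zeta(s,a)=\frac1{s-1}-\psi(a)+O(s-1)$, $z^{-s}=z^{-1}(1-(s-1)\Log z+\cdots)$, and $\mathcal{M}f(s)=\frac{d(-1)}{s-1}+c_0+O(s-1)$, the residue of the product equals $z^{-1}(c_0-d(-1)\psi(a)-d(-1)\Log z)$. The main obstacle is then the bookkeeping identity $c_0=I_{f,A}^*-d(-1)(\Log A+\gamma)$ relating the finite part of the continued Mellin transform at $s=1$ to the regularized integral. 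I would prove it by comparing $\mathcal{M}f$ near $s=1$ with the model transform $\int_0^\infty\frac{e^{-Au}}{u}u^{s-1}\,du=A^{1-s}\Gamma(s-1)$, whose Laurent expansion $\frac1{s-1}-\gamma-\Log A+O(s-1)$ supplies exactly $\Log A$ and, through $\Gamma'(1)=-\gamma$, the constant $\gamma$; the difference of the two is precisely the convergent integral defining $I_{f,A}^*$, and the elementary identity $\int_0^1\frac{1-e^{-v}}v\,dv-\int_1^\infty\frac{e^{-v}}v\,dv=\gamma$ closes the computation. Substituting $c_0$ turns the double-pole residue into $\frac{I_{f,A}^*}{z}-\frac{d(-1)}{z}(\Log(Az)+\psi(a)+\gamma)$, matching the claimed $z^{-1}$ terms and completing the expansion.
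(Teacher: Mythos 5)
A preliminary remark: the paper does not prove Lemma \ref{L:EM} at all --- it is imported from \cite[Lemma 2.2]{bringmann2022distributions} --- so your proposal must be measured against the proof in that source. There the argument is real-variable Euler--Maclaurin: one subtracts the principal part, $g(u) := f(u) - \sum_{n=n_0}^{-2} d(n)u^n - d(-1)e^{-Au}u^{-1}$, applies a pre-existing sector-uniform Euler--Maclaurin expansion (in the tradition of Zagier and Bringmann--Jennings-Shaffer--Mahlburg) to $g$, and evaluates the subtracted sums in closed form: $\sum_{m\geq 0}(z(m+a))^n = \zeta(-n,a)z^n$ for $n\leq -2$, while the sum over the $e^{-Au}/u$ term generates the $z^{-1}(\Log(Az)+\psi(a)+\gamma)$ block together with $I^*_{f,A}$. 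Your Mellin/harmonic-sum route is genuinely different, and its crux is correct: the double pole at $s=1$ and the identity $c_0 = I^*_{f,A} - d(-1)(\Log A+\gamma)$ check out --- splitting the continued transform at $u=1$ gives $c_0 = \sum_{n=n_0}^{-2}\frac{d(n)}{n+1} + \int_0^1 \big(f(u)-\sum_{n=n_0}^{-1}d(n)u^n\big)\,du + \int_1^\infty f(u)\,du$, and comparison with $I^*_{f,A}$ reduces, after $v=Au$, precisely to your identity $\int_0^1 \frac{1-e^{-v}}{v}\,dv - \int_1^\infty\frac{e^{-v}}{v}\,dv = \gamma$. The Mellin method buys a mechanical derivation (every term is a residue, and the $\Log(Az)$ is transparently a pole collision); the Euler--Maclaurin route buys uniformity on the closed sector without contour estimates, which is presumably why the literature prefers it. Incidentally, your residue calculus yields the first sum only over $n_0\leq n\leq -2$, which is the correct statement as in \cite{bringmann2022distributions}: the upper limit $\infty$ printed in this paper's Lemma \ref{L:EM} is a misprint (for $n\geq 0$ it double counts via $\zeta(-n,a) = -\frac{B_{n+1}(a)}{n+1}$, and $n=-1$ would give the undefined $\zeta(1,a)$); the application to $G_{a,p}$ in Section \ref{S:App} indeed uses the $-2$ truncation.

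Two repairs are needed. First, you propose subtracting the monomials for $n_0\leq n\leq -1$; subtract only those with $n\leq -2$. The monomial $d(-1)u^{-1}$ is not of sufficient decay, so removing it destroys the Mellin transform at infinity, and its harmonic sum $\sum_{m\geq0} (z(m+a))^{-1}$ diverges, so it cannot be ``evaluated directly in terms of $\zeta(s,a)$''; moreover, your own analysis at $s=1$ presupposes $\mathcal{M}f(s) = \frac{d(-1)}{s-1}+c_0+O(s-1)$, i.e.\ that this term was never removed, so the two halves of your sketch are inconsistent on this point. With only the $n\leq -2$ terms subtracted, the leftover has fundamental strip $1<\re{s}<1+\varepsilon$ and everything is coherent. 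Second, the uniformity claim at the boundary of the sector is too quick: rotating the contour (which requires holomorphy of $f$ on $D_\theta$, left implicit in the statement) gives $\mathcal{M}f(\sigma+it)\ll e^{-\theta|t|}$, while $|z^{-s}| = |z|^{-\sigma}e^{t\arg z}$, so at $\arg z = \pm\theta$ the exponentials cancel exactly and the polynomial growth of $\zeta(s,a)$ on vertical lines makes the shifted integrals divergent as written --- the decay of $\mathcal{M}f$ does not ``exceed'' the growth of $z^{-s}$ on the closed sector. This is exactly where the hypothesis you never invoke --- that all derivatives of $f$ are of sufficient decay --- earns its keep: integrating the Mellin integral by parts $k$ times produces an extra factor of order $|s|^{-k}$, restoring absolute convergence uniformly in $z\in D_\theta$. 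With these two adjustments, your sketch is a complete and correct alternative proof.
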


\section{Proof of Proposition \ref{P:Wright} and Theorem \ref{T:Hyperbolic}}\label{S:Proof}

\begin{proof}[Proof of Proposition \ref{P:Wright}]
We consider the circle $\calC$ centred at the origin, surrounding zero exactly once in the counter-clockwise orientation, and of radius $|q| = e^{-\lambda}$ where $\lambda = \sqrt{\frac{A}{Kn}}$. In particular, we let $q = e^{-z}$ with $z=\lambda(1+i\beta M)$. The major arc near $q=1$ will be that where $-1\leq \beta \leq 1$ (in analogy with the arcs used in \cite{bringmann2016dyson}).

Let $\calC_M$ and $\calC_m$ denote the major and minor arcs, respectively, and note that $\calC_M$ consists of neighbourhoods around the $K$-th roots of unity. Denote the major arc corresponding to the root of unity $\zeta_K^h$ by $\calC_h$ for $0\leq h\leq K-1$. As $F(q)$ has major arcs at the $K$-th roots of unity, the behaviour around each major arc is analogous by symmetry. Indeed, observe that\[ \frac{1}{2\pi i}\int_{\calC_h} \frac{F(q)}{q^{m+1}}dq = \frac{1}{2\pi i}\int_{\calC_0} \frac{F(\zeta_K^h q)}{(\zeta_K^h q)^{m+1}}d(\zeta_K^h q) = \frac{\zeta_K^{-hm}}{2\pi i}\int_{\calC_0} \frac{F(q)}{q^{m+1}}dq\]for each $0\leq h\leq K-1$ and every $m\in\N$. By Cauchy's Theorem, we then find that\[ C(n) = \frac{1}{2\pi i}\int_{\calC} \frac{F(q)}{q^{Kn+1}}dq=\frac{K}{2\pi i}\int_{\calC_0}\frac{F(q)}{q^{Kn+1}}dq+ \frac{1}{2\pi i} \int_{\calC_m}\frac{F(q)}{q^{Kn+1}}dq.\]Now, define\[A_j(n) = \frac{K}{2\pi i}\int_{\calC_0}\frac{z^{j+B}e^{\frac{A}{z}}}{q^{Kn+1}}dq\]and express\[ C(n) - \sum_{j=0}^{N-1}\alpha_j A_j(n) = \varphi_1(n)+\varphi_2(n)\]where \[ \varphi_1(n) = \frac{1}{2\pi i}\int_{\calC_m} \frac{F(q)}{q^{Kn+1}}dq, \ \ \ \ \varphi_2(n) = \frac{K}{2\pi i}\int_{\calC_0}\( F(q)z^{-B}e^{-\frac{A}{z}} - \sum_{j=0}^{N-1}\alpha_jz^j \) \frac{z^B e^{\frac{A}{z}}}{q^{Kn+1}} dq.\]By condition (1), we have that on $\calC_0$, \[ \left| F(q)z^{-B}e^{-\frac{A}{z}} - \sum_{j=0}^{N-1} \alpha_j z^j \right|  = O(|z|^N).\]Since $\re{z} = \lambda$ on $\calC$, \[ \left| \exp\(\frac{A}{z}+Knz \) \right|  \leq \exp\(\frac{A}{\lambda} + Kn\lambda\) = \exp(2\sqrt{AKn}).\]Since the length of $\calC_0$ is $\approx\lambda$ and $\im{z} = O(\lambda)$, we have that $|z|\sim (Kn)^{-\frac{1}{2}}$. Then, Cauchy's integral estimate yields that \[ \varphi_2(n) = O(n^{-\frac{1}{2}(N+B+1)}e^{2\sqrt{AKn}})\]as desired.

Next note that the length of $\calC_m$ is $\approx1$ and so by condition (2), \[ \varphi_1(n) = O\(e^{\frac{1}{\lambda}(A-\kappa)} |q|^{-Kn}\) = O\( e^{\(2-\frac{\kappa}{A}\)\sqrt{AKn}}\).\]Consequently, we have that
 \begin{equation}\label{EQ:Wright}
C(n) - \sum_{j=0}^{N-1} \alpha_j A_j(n) = O\( n^{-\frac{1}{2}(N+B+1)}e^{2\sqrt{AKn}}\).
\end{equation}
Now, notice that we may parametrize $q\in\calC_0$ counter-clockwise by $\lambda-i \lambda M\leq z\leq \lambda+i \lambda M$. Thus, we get that \[A_j(n) = \frac{K}{2\pi i}\int_{\lambda(1-iM)}^{\lambda(1+iM)} z^{j+B}e^{\frac{A}{z}+Knz}dz\]and, by making a change of variables $z\mapsto\lambda z$, \[ A_j(n) = \frac{K \lambda^{j+B+1}}{2\pi i}\int_{1-iM}^{1+iM}z^{j+B}e^{\sqrt{AKn}\(z+\frac{1}{z}\)}dz.\]
By \cite[Lemma 4.2]{bringmann2016dyson}\footnote{The papers \cite{bringmann2016dyson} and \cite{Roades} rely on obtaining an estimate for the integrals $A_j$ in terms of $I$-Bessel functions by arguments given in \cite{Roades}. While the result given in \cite{Roades} is correct, we believe there is a very slight misstep in their proof, and we use an alternative route to obtaining the $I$-Bessel functions.}, we get that 
\begin{align*}
A_j(n) & = \lambda^{j+B+1}\( I_{-(j+B+1)}(2\sqrt{AKn}) + O\( e^{\sqrt{AKn}\(1+ \frac{1}{1+M^2}\)}\)\)\\
& = \(\frac{A}{Kn}\)^{\frac{1}{2}(j+B+1)}I_{-(j+B+1)}(2\sqrt{AKn}) + O\( n^{-\frac{1}{2}(j+B+1)}e^{\sqrt{AKn}\( 1+\frac{1}{1+M^2}\)} \).
\end{align*}The desired estimate is then obtained by plugging into (\ref{EQ:Wright}).
\end{proof}

Next, we show that the Jensen polynomials associated with the asymptotic formula given for $C(n)$ will be asymptotically hyperbolic. To do so, we will apply Theorem \ref{T:GORZJensen} to the Fourier coefficients found in Proposition \ref{P:Wright}.

\begin{proof}[Proof of Theorem \ref{T:Hyperbolic}]
Take $N=1$ in Proposition \ref{P:Wright} to obtain that \[ C(n) =\alpha_0 \(\frac{A}{Kn}\)^{\frac{1}{2}(B+1)}I_{-(B+1)}(2\sqrt{AKn}) + O\( e^{2\sqrt{AKn}}n^{-\frac{1}{2}(B+2)} \).\]By appealing to the asymptotic form of the $I$-Bessel function given in \eqref{Eqn: I bessel asymp}, we find that 
\begin{align*}
C(n)& \sim\alpha_0\(\frac{A}{Kn}\)^{\frac{1}{2}(B+1)}(2\pi\sqrt{AKn})^{-\frac{1}{2}}e^{2\sqrt{AKn}} \sum_{m=0}^\infty a_m n^{-\frac{m}{2}} + O\( e^{2\sqrt{AKn}}n^{-\frac{1}{2}(B+2)}\)\\
& = \frac{\alpha_0}{\sqrt{2\pi}} A^{\frac{2B+1}{4}}(Kn)^{-\frac{2B+3}{4}} e^{2\sqrt{AKn}}\sum_{m=0}^\infty a_m n^{-\frac{m}{2}} + O\( e^{2\sqrt{AKn}}n^{-\frac{1}{2}(B+2)}\),
\end{align*}
where 
\begin{align*}
	a_m = \frac{A^{\frac{m}{2}}\(\frac{1}{2}+B+1\)_m\(\frac{1}{2}-(B+1)\)_m}{m!}.
\end{align*} 
We identify the above asymptotic with a Taylor series as in the proof of \cite[Theorem 5]{griffin2019jensen}. Thus, 
\begin{align*}
	C(n)\sim \lambda n^{-\frac{2B+3}{4}}e^{2\sqrt{AKn}}\exp\(c_0+c_1n^{-\frac{1}{2}}+c_2 n^{-1}+c_3 n^{-\frac{3}{2}}\ldots \)
\end{align*}
for some choice of constants $\lambda$ and $c_i$. Therefore, we obtain that  \[ \log\(\frac{C(n+k)}{C(n)}\) \sim 2\sqrt{AK}\sum_{m=1}^\infty {{1/2}\choose{m}}\frac{k^m}{n^{m-\frac{1}{2}}} + \frac{-2B-3}{4}\sum_{m=1}^\infty \frac{(-1)^{m-1}k^m}{mn^m} + \sum_{m,l\geq 1} c_l{{-l/2}\choose{m}}\frac{k^m}{n^{m+\frac{l}{2}}}.\]Upon considering, 
\begin{align*}
A(n) =& (AKn^{-1})^{\frac{1}{2}}+\frac{(2B+3)}{4n}, \\ \delta(n) =& \sqrt{2}(AK)^{\frac{1}{4}}n^{-\frac{3}{4}} + O \( n^{-\frac{5}{4}} \), \\ g_j(n) =& \frac{(-2B-3)(-1)^{j-1}}{4jn^j}
\end{align*}
the desired conclusion is obtained by Theorem \ref{T:GORZJensen}.
\end{proof}

\section{Proof of Theorems \ref{T:AppH} and \ref{T:App}}\label{S:App}

Within this section, we apply our results to the two classes of infinite products given by $H_{r,t} (q)$ and $G_{a,p}(q)$. By Theorems \ref{T:Hyperbolic} and \ref{T:Turan}, it suffices to show that these products satisfy conditions (1) and (2) of Proposition \ref{P:Wright}. We begin by considering $H_{r,t}(q) = (q^t;q^t)_\infty^{-r}$.

\begin{proof}[Proof of Theorem \ref{T:AppH}]
	By the classical modular transformation behaviour of the $q$-Pochhammer symbol (which can be viewed in terms of the transformation behaviour of the Dedekind eta function - see e.g. 5.8.1 of \cite{CohenStromberg}), it is not difficult to show that
	\begin{align*}
		(e^{-tz} ; e^{-tz})^{-r}_\infty = \left( \frac{tz}{2\pi}\right)^{\frac{r}{2}} e^{\frac{\pi^2 r}{6tz}} + O(\vert z\vert),
		\end{align*}
	in accordance with the first condition of Proposition \ref{P:Wright}. Furthermore, using the same arguments as in \cite{bringmann2016dyson}, we may show that $H_{r,t}(q)$ satisfies the second condition of Proposition \ref{P:Wright}. To this end, observe that \[\Log(H_{r,t}(q)) = -r\sum_{n=0}^\infty \log\(1-q^{t(n+1)}\) = r\sum_{n,k=1}^\infty \frac{q^{kt(n+1)}}{k} = r\sum_{k=1}^\infty \frac{q^{kt}}{k}\sum_{n=1}^\infty (q^{kt})^n = r\sum_{k=1}^\infty \frac{q^{kt}}{k(1-q^{kt})}.\]So, for each integer $1\leq h\leq t$, we have that \begin{align*}
\left\lvert \Log(H_{r,t}(\zeta_t^hq)) \right \rvert & = r\left|\sum_{k=1}^\infty \frac{q^{tk}}{k(1-q^{tk})}\right| \\
& \leq r\(\left| \frac{q^t}{1-q^t}\right| - \frac{|q|^t}{1-|q|^t} +  \sum_{k=1}^\infty\frac{|q|^{kt}}{1-|q|^{kt}}\)\\
& \leq r\(\left| \frac{q^t}{1-q^t}\right| - \frac{|q|^t}{1-|q|^t} + \log\( P(|q|^t)\)\)
\end{align*}where $P(q) \coloneqq 1/(q; q)_\infty$ denotes the generating function of partitions. It is well-known (for instance, see \cite[Lemma 2.1]{bringmann2016dyson}) that \[ \log(P(|q|^t)) = \frac{\pi^2}{6tx}+\frac{1}{2}\log\( \frac{tx}{2\pi}\) + O(x)\]upon expressing $q= e^{-z}$ and $z=x+iy\in\C$. Thus, it suffices to show that there is some constant $C>0$ such that\[ \left| \frac{q^t}{1-q^t}\right| - \frac{|q|^t}{1-|q|^t} < -\frac{C}{x}\]on the bounded cone $Mx\leq y\leq \frac{2\pi}{t}-Mx$. For these purposes, we will require a few cases.

Suppose that either $Mx\leq y\leq \frac{\pi}{2t}$ or that $\frac{3\pi}{2t}\leq y\leq \frac{2\pi}{t}-Mx$. In which case, we have that $0\leq\cos(ty)\leq \cos(Mtx)$ and consequently, \[|1-q^t|^2 = 1 - 2e^{-tx}\cos(ty)+e^{-2tx} \geq 1-2e^{-tx}\cos(Mtx)+e^{-2tx}.\]Therefore, we have that \[ \frac{1}{|1-q^t|} = \frac{1}{xt\sqrt{M^2+1}}+O(1).\]Next, suppose that $\frac{\pi}{2t}\leq y\leq \frac{3\pi}{2t}$. Then we see that $\cos(ty)\leq0$ and thus,\[ \frac{1}{|1-q^t|}\leq \frac{1}{\sqrt{1+e^{-2tx}}}\leq \frac{1}{xt\sqrt{M^2+1}}.\]Next, notice that \[ \frac{1}{1-|q|^t} = \frac{1}{1-e^{-tx}} = \frac{1}{xt}+O(1),\]and so we may combine the two bounds to obtain that \[\frac{1}{|1-q^t|}-\frac{1}{1-|q|^t} = \frac{1}{xt}\( \frac{1}{\sqrt{M^2+1}}-1\) + O(1).\]Upon exponentiating, we conclude that \[ |H_{r,t}(q)|\leq \sqrt{\frac{xt}{2\pi}}\exp\(\frac{r}{xt}\( \frac{\pi^2}{6} + \frac{1}{\sqrt{M^2+1}}-1\)\)(1+O(x)).\]
\end{proof}

We devote the remainder of this section to showing that \[G_{a,p}(q) \coloneqq \frac{1}{(q^{a} ; q^{p})_\infty}\]satisfies the conditions of Proposition \ref{P:Wright} whenever $p$ is prime and $0\leq a<p$. To do so, we will apply the Euler-Maclaurin summation formula stated in Subsection \ref{SS:EM}.

\begin{proof}[Proof of Theorem \ref{T:App}]
We first observe that \[\Log(G_{a,p}(q)) = -\sum_{n=0}^\infty \Log\(1-q^{a+pn}\) = \sum_{n,k=1}^\infty \frac{(q^{a+pn})^k}{k} = \sum_{k=1}^\infty \frac{q^{ak}}{k}\sum_{n=1}^\infty (q^{pk})^n = \sum_{k=1}^\infty \frac{1}{k}\(\frac{q^{ak}}{1-q^{pk}} \).\]

To start, we check that $G_{a,p}(q)$ satisfies the first constraint of Proposition \ref{P:Wright} with a major arc centred at $q = \zeta_p^h$ for each integer $1\leq h\leq p$. To this end, observe that\[ \Log(G_{a,p}(\zeta_p^h q)) = \sum_{\alpha=1}^p \zeta_p^{ha\alpha}\sum_{\ell\geq0} \frac{q^{a(\alpha+p\ell)}}{(\alpha+p\ell)(1-q^{p(\alpha+p\ell)})}.\]
Defining \[f(x) = \frac{e^{-ax}}{x(1-e^{-px})},\]we may easily see that \[f(x)\sim \sum_{n=-2}^\infty d(n)x^n\]where $d(-2) = \frac{1}{p}$ and $d(-1) = \frac{1}{2} - \frac{a}{p}$. Expressing $q = e^{-z}$, we have that 
\begin{align*}
	\sum_{\ell\geq 0} \frac{q^{a(\alpha+p\ell)}}{(\alpha+p\ell)(1-q^{p(\alpha+p\ell)})} = z\sum_{k=0}^\infty f\(pz\(\ell+\frac{\alpha}{p}\)\)
\end{align*} 
where $1\leq\alpha\leq p$. By applying Lemma \ref{L:EM}, we obtain that \[z\sum_{k=0}^\infty f\(pz\(\ell+\frac{\alpha}{p}\)\)\sim \frac{1}{zp}\zeta\(2, \frac{\alpha}{p}\)+I_{f, a}^* - \left(\frac{1}{2} - \frac{a}{p}\right) \( \Log(pz) + \psi\(\frac{\alpha}{p}\)+\gamma\) - \sum_{n=0}^\infty \frac{d(n)B_{n+1}\( \frac{\alpha}{p}\)}{n+1}z^{n+1}\]and so, \begin{align*}
\Log(G_{a,p}(\zeta_p^h q)) & = \sum_{\alpha=1}^p \zeta_p^{ha\alpha}z\sum_{\ell\geq 0} f\(pz\(\ell+\frac{\alpha}{p}\)\)\\
& = \sum_{\alpha=1}^p \zeta_p^{ha\alpha}  \left[\frac{\zeta\(2, \frac{\alpha}{p}\)}{pz} + I_{f, a}^* - \( \frac{1}{2} - \frac{a}{p}\)\( \Log(pz)+\psi\( \frac{\alpha}{p}\) + \gamma\)\right]+ O(|z|)\\
& = \frac{1}{pz}\( \sum_{\alpha=1}^p \zeta_p^{ha\alpha}\zeta\(2, \frac{\alpha}{p} \)\) + \(\sum_{\alpha=1}^p \zeta_p^{ha\alpha}I_{f,a}^*\)\\
& \ \ \ \ \ \ \ \ \ \ \ \ \ \ \ \ \ \ \ \ \ \ \ \ - \(\frac{1}{2} - \frac{a}{p}\) \( \sum_{\alpha=1}^p \zeta_p^{ha\alpha}\( \Log(pz) +\psi\( \frac{\alpha}{p}\) + \gamma\)\) + O(|z|).
\end{align*}Observe that if $h=p$, then we obtain that \[ \Log(G_{a,p}(\zeta_p^hq))\sim  \frac{\zeta(2, 1)}{pz} + p I_{f,a}^* - \(\frac{1}{2} - \frac{a}{p}\)\Log(pz) + O(|z|)\]as $\psi(1) +\gamma=0$. By making a change of variables $u\mapsto ua$ and applying \cite[Lemma 2.3]{bringmann2022distributions}, we see that \begin{align*}
I_{f,a}^* & = \frac{1}{p}\int_0^\infty \(\frac{e^{-ua}}{u(1-e^{-up})} - \frac{1}{p}u^{-2} - \( \frac{1}{2}-\frac{a}{p}\)\frac{e^{-au}}{u}\)du\\
& = \frac{1}{p}\int_0^\infty \(\frac{e^{-u}}{u(1-e^{-\frac{p}{a}u})} - \frac{a}{p}u^{-2} - \(\frac{1}{2}- \frac{a}{p}\)\frac{e^{-u}}{u} \)du\\
& = \frac{1}{p}\(\log\( \Gamma\( \frac{a}{p}\)\) + \( \frac{1}{2}-\frac{a}{p}\)\log\( \frac{a}{p}\) - \frac{1}{2}\log(2\pi)\).
\end{align*}
 We thus see that 
\begin{align*}
	G_{a,p}(e^{-z}) \sim \frac{\Gamma\left(\frac{a}{p}\right) \left(\frac{a}{p}\right)^{\frac{1}{2}-\frac{a}{p}}}{\sqrt{2\pi}} (pz)^{\frac{1}{2}-\frac{a}{p}} e^{\frac{\pi^2}{6pz}} +O(|z|)
\end{align*}
as $z \to 0^+$ on the major arc.

Next, we will assume that $h\neq p$. In which case, we have that $\sum_{\alpha=1}^p \zeta_p^{ha\alpha}=0$ as $p$ is prime. So, \[ \Log(G_{a,p}(\zeta_p^h q)) \sim \frac{1}{pz}\( \sum_{\alpha=1}^p \zeta_p^{ha\alpha}\zeta\( 2, \frac{\alpha}{p}\)\) - \( \frac{1}{2}-\frac{a}{p}\)\( \sum_{\alpha=1}^p \zeta_p^{ha\alpha}\psi\(\frac{\alpha}{p}\)\) + O(|z|).\]From the identity \cite[p. 39]{campbell1966integrales}, we see that \[ \sum_{\alpha=1}^p \zeta_p^{ha\alpha}\psi\( \frac{\alpha}{p}\) = p\Log(1-\zeta_p^{ha}).\]Additionally, we have that \[ \sum_{\alpha=1}^p\zeta_p^{ha\alpha}\zeta\( 2, \frac{\alpha}{p}\) = p^2\zeta_p^{ha} \Phi(\zeta_p^{ha}, 2, 1)\]where $\Phi(z,s,a):= \sum_{n=0}^\infty\frac{z^n}{(n+a)^s}$ is \emph{Lerch's transcendent}. We conclude that\[\Log(G_{a,p}(\zeta_p^hq)) \sim \frac{\zeta_p^{ha}\Phi(\zeta_p^{ha}, 2, 1)}{z} - \(\frac{1}{2}-\frac{a}{p}\)\Log(1-\zeta_p^{ha}) + O(|z|).\]Finally, observe that

 \[ \re{\zeta_p^{ha}\Phi(\zeta_p^{ha}, 2, 1)} = \frac{\pi^2}{6} - \frac{\pi^2ha}{p} + \frac{\pi^2(ha)^2}{p^2} < \frac{\pi^2}{6}\] by \cite[25.12.8]{NIST:DLMF}. Arguing as in the proof of Theorem \ref{T:AppH}, one may ascertain that \[ |G_{a,p}(q)|\leq \sqrt{\frac{xp}{2\pi}}\exp\(\frac{1}{xp}\( \frac{\pi^2}{6} + \frac{1}{\sqrt{M^2+1}}-1\)\)(1+O(x))\]over the minor arcs.
 
 We then apply Proposition \ref{P:Wright} with $K=1$, $A = \frac{\pi^2}{6p}$, $B=\frac{1}{2} -\frac{a}{p}$ and $\alpha_0 =   \frac{\Gamma\left(\frac{a}{p}\right) \left(\frac{a}{p}\right)^{\frac{1}{2}-\frac{a}{p}}p^{\frac{1}{2} -\frac{a}{p}}}{\sqrt{2\pi} }$ to conclude the asymptotic form of the coefficients, and thus Theorems \ref{T:Hyperbolic} and \ref{T:Turan} for the remaining results.
\end{proof}


\begin{thebibliography}{99}
\bibitem{bringmann2022distributions} K. Bringmann, W. Craig, J. Males, and K. Ono, \textit{Distributions on partitions arising from {H}ilbert schemes and hook lengths}, Forum Math. Sigma, 10, 2022, E49.

\bibitem{bringmann2016dyson} K. Bringmann and J. Dousse, \textit{On {D}yson's crank conjecture and the uniform asymptotic behavior of certain inverse theta functions},
	Trans. Amer. Math. Soc. {\bf 368} (2016), no. 5, 3141--3155.
	
\bibitem{bringmann2019peak} K. Bringmann, C. Jennings-Shaffer, K. Mahlburg, and R. Rhoades, \textit{Peak positions of strongly unimodal sequences}, Trans. Amer. Math. Soc., {\bf 372} (2018), no. 10, 7087--7109.

\bibitem{campbell1966integrales} R. Campbell, \textit{Les int{\'e}grales eul{\'e}riennes et leurs applications: {\'e}tude approfondie de la fonction gamma}, Collection Universitaire de Mathématiques, XX. Dunod, Paris 1966 xxv+268 pp.

\bibitem{cesana2021asymptotic} G. Cesana, W. Craig, and J. Males, \textit{Asymptotic equidistribution for partition statistics and topological invariants}, preprint arXiv:2111.13766 (2021).

\bibitem{chen2019higher} W. Chen, D. Jia and L. Wang, \textit{Higher order {T}ur{\'a}n inequalities for the partition function}, Trans. Amer. Math. Soc. {\bf 372} (2019), no. 3, 2143--2165.

\bibitem{chern2019nonmodular} S. Chern, \textit{Nonmodular infinite products and a {C}onjecture of {S}eo and {Y}ee}, preprint arXiv:1912.10341 (2022).

\bibitem{CohenStromberg}  H. Cohen and F. Stromberg, \emph{Modular forms: a classical approach}, vol 179 of \textit{Graduate Studies in Mathematics}. American Mathematical Society, 2017.

\bibitem{craig2021note} W. Craig and A. Pun, \textit{A note on the higher order {T}ur{\'a}n inequalities for k-regular partitions}, Res. Number Theory, {\bf 7} (2021), no. 1, 1--7.

\bibitem{dawsey2019effective} M. Dawsey and R. Masri, \textit{Effective bounds for the Andrews spt-function}, Forum Math. {\bf 31} (2019), no. 3, 743--767.

\bibitem{desalvo2015log} S. DeSalvo and I. Pak, \textit{Log-concavity of the partition function}, Ramanujan J. {\bf 38} (2015), no. 1, 61--73.

\bibitem{dimitrov1998higher} D. Dimitrov, \textit{Higher order {T}ur{\'a}n inequalities}, Proc. Amer. Math. Soc. {\bf 126} (1998), no. 7, 2033--2037.
	
\bibitem{griffin2019jensen} M. Griffin, K. Ono, L. Rolen, and D. Zagier, \textit{Jensen polynomials for the {R}iemann zeta function and other sequences}, Proc. Natl. Acad. Sci. USA, {\bf 116} (2019), no. 23, 11103--11110.
	
\bibitem{griffin2022jensen} M. Griffin, K. Ono, L. Rolen, J. Thorner, Z. Tripp, and I. Wagner, \textit{Jensen polynomials for the {R}iemann {X}i-function}, Adv. Math., {\bf 397} (2022), 108186.

\bibitem{hardy2000asymptotic} G. Hardy and S. Ramanujan, \textit{Asymptotic formulae in combinatory analysis}, Proc. London. Math. Soc. Ser. 2 {\bf 17} (1918), 75--115.

\bibitem{larson2019hyperbolicity} H. Larson and I. Wagner, \textit{Hyperbolicity of the partition {J}ensen polynomials}, Res. Number Theory, {\bf 5} (2019), no. 2, 1--12.

\bibitem{NIST:DLMF} F.~W.~J. Olver, A.~B. {Olde Daalhuis}, D.~W. Lozier, B.~I. Schneider,
R.~F. Boisvert, C.~W. Clark, B.~R. Miller, B.~V. Saunders,
H.~S. Cohl, and M.~A. McClain, eds., {\it NIST Digital Library of Mathematical Functions}, {http://dlmf.nist.gov/},  Release 1.1.7 of 2022-10-15.

\bibitem{o2022limits} C. O{'}Sullivan, \textit{Limits of {J}ensen polynomials for partitions and other sequences}, Monatsh. Math., {\bf 199} (2022), no. 1, 203--230.

\bibitem{o2021zeros} C. O{'}Sullivan, \textit{Zeros of {J}ensen polynomials and asymptotics for the {R}iemann {X}i function}, Res. Math. Sci., {\bf 8} (2021), no.3, Paper No. 46, 27 pp.

\bibitem{ono2022turan} K. Ono, S. Pujahari and L. Rolen, \textit{Tur\'an inequalities for the plane partition function}, preprint arXiv:2201.01352 (2022).

\bibitem{polya1927algebraisch}
G. P{\'o}lya and J.L.W.V. Jensen, \textit{{\"U}ber die algebraisch-funktionentheoretische {U}ntersuchungen von {JLWV} {J}ensen}, AF H{\o}st, 1927.

\bibitem{polya1914uber} G. P{\'o}lya and J. Schur, \textit{{\"U}ber zwei {A}rten von {F}aktorenfolgen in der {T}heorie der algebrasichen {G}leichungen}, J. Reine Angew. Math. {\bf 144} (1914), 89--113.

\bibitem{Roades} T. H. Ngo and R. C. Rhoades, \textit{Integer partitions, probabilities and quantum modular forms}, Res. Math. Sci., {\bf 4} (2017), Paper No. 17, 36 pp.

\bibitem{szego1948on} G. Szeg{\"o}, \textit{On an inequality of {P}. {T}ur{\'a}n concerning {L}egendre polynomials}, Bull. Amer. Math. Soc. {\bf 54} (1948), 401--405.

\bibitem{wagner2020jensen} I. Wagner, \textit{The {J}ensen--{P}{\'o}lya program for various {L}-functions}, Forum Math., {\bf 32} (2020), no. 2, 525--539.

\bibitem{wright1971stacks} E. M. Wright, \textit{Stacks {(II)}}, Q. J. Math., {\bf 22} (1971), no. 1, 107--116.
\end{thebibliography}
\end{document}